\documentclass[12pt]{article}
\usepackage{amsmath,amssymb,amsfonts,amsthm}
\usepackage[margin=1in]{geometry}
\usepackage{subcaption,graphicx}

\theoremstyle{plain}

\newcommand{\cn}{\tilde{g}}

\newtheorem{theorem}{Theorem}[section]
\newtheorem{proposition}[theorem]{Proposition}
\newtheorem{observation}[theorem]{Observation}
\newtheorem{lemma}[theorem]{Lemma}

\title{Settling the nonorientable genus of the nearly complete bipartite graphs}
\author{Warren Singh and Timothy Sun\\Department of Computer Science\\San Francisco State University}
\date{}

\begin{document}

\maketitle

\begin{abstract}
A graph is said to be nearly complete bipartite if it can be obtained by deleting a set of independent edges from a complete bipartite graph. The nonorientable genus of such graphs is known except in a few cases where the sizes of the partite classes differ by at most one, and a maximum matching is deleted. We resolve these missing cases using three classic tools for constructing genus embeddings of the complete bipartite graphs: current graphs, diamond sums, and the direct rotation systems of Ringel. 
\end{abstract}

\section{Introduction}

Ringel \cite{Ringel-Orientable, Ringel-Nonorientable} gave the first proof of the orientable and nonorientable genus formulas for complete bipartite graphs by explicitly writing down rotation systems and face sets for each such graph. Other approaches have been discovered in attempts to simplify Ringel's proof: Gross and Tucker's monograph \cite{GrossTucker} presented a \emph{current graph} construction, and Bouchet \cite{Bouchet-Diamond} gave a proof by induction. Bouchet's main tool, the \emph{diamond sum} operation on embeddings, has received recent attention, playing a central role in proofs of other genus formulas, e.g., the nonorientable genus of the complete tripartite graphs \cite{Ellingham-Tripartite}, the Even Map Color Theorem \cite{EvenMapColor}, and minimal quadrangulations \cite{Abusaif-MinQuad}. 

Mohar, Parsons, and Pisanski \cite{Mohar-Orientable} extended Bouchet's proof to calculate the orientable genus of the \emph{nearly complete bipartite graphs}. In their notation, the graph $G(m,n,k)$ stands for the complete bipartite graph $K_{m,n}$ with $k$ independent edges deleted. Mohar \cite{Mohar-Nonorientable} later proved analogues for the nonorientable case using similar ideas, but there were two families that were left unsolved. Our goal is to resolve these gaps:

\begin{theorem}
For all $n \geq 3$, $G(2n+1, 2n, 2n)$ has a nonorientable quadrangular embedding.
\label{thm-imbalanced}
\end{theorem}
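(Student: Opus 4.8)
The plan is to strip off the one full-degree vertex, reconstruct what remains from a current graph, and mop up the finitely many leftover cases with a diamond sum and with rotation systems written out in Ringel's style. Write $A=\{a_0,a_1,\dots,a_{2n}\}$ and $B=\{b_1,\dots,b_{2n}\}$ with $a_i\not\sim b_i$ for $1\le i\le 2n$, so that $a_0$ is the unique vertex of degree $2n=|B|$. In any quadrangular embedding of $G(2n+1,2n,2n)$ each of the $2n$ faces incident with $a_0$ is a $4$-cycle having a single corner at $a_0$, so deleting $a_0$ and its edges merges them into a single $4n$-gon whose boundary walk alternates between $B$ and $A\setminus\{a_0\}$ and visits every vertex of $B$ exactly once. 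Conversely, if the crown graph $H:=K_{2n,2n}$ minus a perfect matching --- that is, $G(2n,2n,2n)$ --- has an embedding in which all faces are quadrilaterals except one such $4n$-gon, then reinserting $a_0$ in that face and joining it to all of $B$ cuts the $4n$-gon into $2n$ quadrilaterals and changes neither the surface nor the graph outside $a_0$; the Euler characteristic shows both embeddings lie in the nonorientable surface $N_{2n^2-4n+1}$, which is the smallest surface that could support a quadrangulation of a girth-$4$ graph of this size. So it suffices to build, for each $n\ge 3$, such a near-quadrangulation of $H$.

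For every residue of $n$ outside a bounded exceptional set I would obtain this embedding of $H$ from a current graph, using the $\Z_{2n}$ symmetry of $H$ that rotates all indices. With $B=\Z_{2n}$ and a $\Z_{2n}$-invariant rotation system the faces fall into orbits, and I would look for a cubic or near-cubic current graph with currents in $\Z_{2n}$ whose log, computed by Kirchhoff's current law and the usual tracing rules, yields a rotation system for $H$ in which every face orbit has length $4$ except for a single $4n$-gon forced by one local feature of the current graph. Morally this is the Gross--Tucker cascade for the quadrangular embedding of $K_{2n,2n}$, reworked so that a perfect matching is simply absent from the derived graph and one face has been opened up; it is worth stressing that, because no edge can appear twice on a $4$-cycle, the embedding of $H$ we need cannot be produced merely by deleting matching edges from a quadrangulation of $K_{2n,2n}$, so the current graph itself must carry the surplus genus $n-1$. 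The residue of $n$ modulo $4$ (or perhaps just modulo $2$) should dictate a small local edit of the current graph, giving two or three nearly identical families.

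The bounded list of residues and small values of $n$ missed by that family --- in particular $n=3$ and $n=4$ --- I would dispatch with the other two tools. Wherever a suitable nonorientable quadrangular embedding of a complete bipartite graph is available, whether from Ringel's theorem or written out directly by one of Ringel's rotation systems, I would take its diamond sum at a vertex of the right degree with a small, explicitly drawn gadget, choosing the gadget so that the surgery deletes the matching and installs the apex simultaneously; since the diamond sum adds Euler genera and, when it identifies a quadrilateral of one summand with a quadrilateral of the other, preserves quadrangularity, the result can be arranged to be exactly the embedding sought. For the very smallest graphs I would instead simply record a rotation system and verify by face tracing that every face is a $4$-gon, as in Ringel's original arguments.

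The principal obstacle is the one endemic to quadrangular-embedding constructions: forcing \emph{every} face to have length $4$. Current-graph logs naturally produce faces of many lengths, so the real work is in choosing the underlying graph of the current graph, its rotation, and its current assignment so that each face orbit it generates is a $4$-gon apart from the single intended $4n$-gon, and so that one such choice serves a whole infinite arithmetic progression of $n$; the diamond-sum steps must likewise be engineered to avoid creating any face of length $6$ or more. A lesser but real nuisance is completeness of the case analysis: the symmetric construction will leave a finite set of graphs that genuinely require the hand-built embeddings, and one has to confirm that $n=3$, $n=4$, and every stray residue class are actually covered.
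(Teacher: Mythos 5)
Your opening reduction is correct and is a genuinely different way to frame the problem from the paper's: deleting the unique saturated white vertex $a_0$ from a quadrangular embedding of $G(2n+1,2n,2n)$ does merge its $2n$ incident quadrilaterals into a single $4n$-gon alternating between $B$ and $A\setminus\{a_0\}$ and visiting each black vertex once, and conversely such a near-quadrangulation of $G(2n,2n,2n)$ in $N_{2n^2-4n+1}$ can be capped off by an apex. The difficulty is that everything after that reduction is a statement of intent rather than a proof. The entire content of the theorem now resides in producing, for every $n\ge 3$, a current graph (or other construction) whose derived embedding of the crown graph has all faces quadrilateral except one $4n$-gon with the required alternation property, and you supply no such current graph, no current assignment, no verification of Kirchhoff's law or of the face structure, and no list of which residues need which variant. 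You explicitly acknowledge that "the real work" is exactly this step. Two concrete obstacles make it nonroutine: first, $G(2n,2n,2n)$ is not a Cayley graph of $\mathbb{Z}_{4n}$ on odd residues (no odd residue is an involution mod $4n$), so you would need index-two bipartite current graphs over $\mathbb{Z}_{2n}$ with some vortex-like feature that simultaneously suppresses the matching and opens the $4n$-gon; second, the $4n$-gon must traverse each black vertex exactly once, a global condition on a face orbit that does not follow from (C1)--(C4)-style bookkeeping and would need its own verification. The small cases and stray residues are likewise waved at ("I would dispatch with the other two tools") without any explicit rotation system or gadget.

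For comparison, the paper sidesteps the need for any new current graphs here: it starts from Ringel's explicit nonorientable quadrangular embeddings of $K_{n+1,2n}$, shows that one can always find an \emph{odd pairing} of the black vertices (each pair separated by an odd number of vertices in the rotation of a distinct white vertex), and then performs $n$ successive diamond sums with small embeddings of $G(3,2n,2)$, each of which excises one white vertex, adds two, and desaturates the chosen pair. The only data needed are Ringel's published rotations, and the odd-pairing condition is exactly what guarantees the gluings exist. If you want to salvage your approach, you must actually exhibit the current-graph families (with their logs traced) and the hand-built exceptional embeddings; as written, the proposal identifies a plausible target but does not prove the theorem.
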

\begin{theorem}
For all $n \geq 6$, the graph $G(n,n,n)$ has a nonorientable quadrangular embedding.
\label{thm-balanced}
\end{theorem}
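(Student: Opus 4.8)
The plan is to build the required embedding by \emph{diamond sums}, feeding in the embeddings of $G(2m+1,2m,2m)$ supplied by Theorem~\ref{thm-imbalanced} together with Ringel's quadrangular embeddings of complete bipartite graphs, and to dispatch the finitely many small orders that the reduction misses by hand, using current graphs and direct rotation systems. First I would note that the problem is exactly to produce a quadrangular embedding: since $G(n,n,n)$ has girth $4$, any embedding with all faces $4$-gons is a minimum-(Euler-)genus embedding, and such an embedding is nonorientable as soon as one of the pieces it is assembled from is, because removing an open disk from a nonorientable surface leaves a nonorientable surface. The basic move is the diamond-sum surgery: in a quadrangulation, deleting a vertex $v$ of degree $d$ together with its incident edges merges the $d$ quadrilaterals at $v$ into one $2d$-gon whose boundary walk alternates the $d$ neighbours of $v$ with the $d$ ``far corners'' of those quadrilaterals; gluing two such vertex-deleted quadrangulations along these $2d$-gons, matched vertex-by-vertex along the boundary walks, produces a new quadrangulation whose graph is the union of the two vertex-deleted graphs under the identification of the two walks.

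The heart of the argument is to choose the two factors and the two deleted vertices so that this identification yields \emph{precisely} $K_{n,n}$ minus a perfect matching, and nothing more. I would treat the two parities of $n$ separately, in each case taking one factor to be a suitable quadrangular embedding of a graph supplied by Theorem~\ref{thm-imbalanced} and the other to be a complete bipartite graph or a second graph from Theorem~\ref{thm-imbalanced}, and deleting vertices whose degrees match. The indispensable property is that each deleted vertex be \emph{spread out}: the quadrilaterals around it should collectively meet every vertex of the part they reach into, so that its boundary walk is a cycle through all those vertices with no forced repetition. This is the same tightness condition that powers Bouchet's diamond-sum induction for $K_{m,n}$, and I would either arrange it inside the constructions behind Theorem~\ref{thm-imbalanced} and the complete bipartite quadrangulations or prove a short lemma that such a vertex always exists. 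One then verifies edge by edge that the glued graph has exactly the right missing edges: the pitfalls are collapsing the boundary walk (and so deleting too much) or picking two factors whose edge sets overlap so heavily that their union is complete bipartite (and so deleting too little), so getting the incidence pattern of the two boundary walks right is essential.

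I expect the main obstacle to be this last bookkeeping together with the base cases. The diamond-sum reduction will bottom out at a few small orders of $n$ (and conceivably at one or two residue classes modulo a small modulus), which have to be constructed directly; here I would use Gross and Tucker's current-graph machinery to manufacture a quadrangulation carrying a spread-out vertex of the needed degree, falling back on Ringel's explicit rotation-system descriptions when coaxing a current graph into the required form is unwieldy. Checking that these hand-built rotation systems really are quadrangular, really are nonorientable, and really do glue to $G(n,n,n)$ --- and matching up the orientations of the two boundary walks so that the diamond sum lands on a nonorientable surface rather than an orientable one --- is the most computation-intensive and error-prone step of the proof.
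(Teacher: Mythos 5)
Your overall architecture --- diamond sums as the engine, with current graphs and explicit rotation systems as a fallback for whatever the reduction misses --- inverts what actually has to happen, and the inversion conceals a real obstruction. For odd $n$, $G(n,n,n)$ is \emph{never} the diamond sum of two quadrangular embeddings of nearly complete bipartite graphs. Indeed, if $G(m_1,n,k_1)$ and $G(m_2,n,k_2)$ are to combine into $G(m_1+m_2-2,\,n,\,k_1+k_2)=G(n,n,n)$, then $k_1+k_2=n=(m_1-1)+(m_2-1)$, while each excised white vertex must be saturated, forcing $k_i\le m_i-1$; hence $k_i=m_i-1$ for both factors. Each factor then has $m_i n-(m_i-1)=m_i(n-1)+1$ edges, which is odd when $n$ is odd, and a quadrangular embedding satisfies $|E|=2|F|$ and so needs an even number of edges. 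This is precisely the phenomenon the paper flags when it says some of these quadrangulations ``cannot be the diamond sum of two smaller nearly complete bipartite graphs'': all odd $n\ge 7$ --- two full residue classes modulo $4$, not ``a few small orders'' --- must be built directly. In the paper this is the substantive content of the theorem: two infinite families of index-$2$ current graphs over $\mathbb{Z}_{2n}$ satisfying Kirchhoff's law, a two-face condition, and a prescribed log condition, plus a separate nonorientability verification for $n\equiv 1\pmod 4$ (where the Euler genus is even, so nonorientability is not forced by parity and one must exhibit a closed walk crossing an odd number of twisted edges). Your proposal defers all of this to ``coaxing a current graph into the required form,'' which is where the actual work lives.

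The specific ingredients you name also do not fit together. The embeddings from Theorem~\ref{thm-imbalanced} are of $G(2m+1,2m,2m)$: every black vertex is already unsaturated and only one white vertex is saturated, so excising that white vertex and summing with some $G(m_2,2m,k_2)$ yields $G(2m-1+m_2,\,2m,\,2m+k_2)$, which can only be of the form $G(n,n,n)$ in the degenerate case $m_2=1$, $k_2=0$. The workable diamond-sum route for even $n$ runs in the opposite direction: start from $K_{n/2,n}$ (an input to the machinery behind Theorem~\ref{thm-imbalanced}, not its output) and desaturate the black vertices in pairs via copies of $G(3,n,2)$, using an odd pairing to guarantee the gluings are available --- this is how the paper obtains $G(6,6,6)$. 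For $n\equiv 2\pmod 4$ with $n\ge 10$ it instead invokes Mohar's reduction to $G(7,7,7)$, and for $n\equiv 0\pmod 4$ the result was already known. So the even case is salvageable roughly along your lines, but the odd case requires a genuinely different construction that your proposal does not supply.
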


Mohar \emph{et al.}\ \cite{Mohar-Orientable, Mohar-Nonorientable} reduced the problem to determining which such graphs have \emph{quadrangular embeddings}, embeddings where every face is a quadrilateral. The genus of the remaining graphs, which are either subgraphs of these graphs or the complete bipartite graphs, follows from a lower bound derived from Euler's formula. For example, $K_{4,4}$ has a quadrangular embedding in the torus, which means that $G(4,4,3)$ also embeds in the torus. $G(4,4,3)$ has too many edges to be planar, so its genus must be 1. 

The diamond sum techniques used to solve the orientable problem were unable to cover all of the additional cases introduced in the nonorientable analogue, namely the graphs which are expected to quadrangulate surfaces with odd Euler characteristic. Furthermore, some of the quadrangular embeddings cannot be the diamond sum of two smaller nearly complete bipartite graphs. Mohar \emph{et al.}\ \cite{Mohar-Orientable} relied on Jungerman, Stahl, and White \cite{Jungerman-Hypergraph} for orientable quadrangular embeddings of $G(n,n,n)$ when $n \equiv 0,1 \pmod{4}$. The latter paper did not consider the nonorientable genus outside of stating a lower bound.

Our constructions apply all three aforementioned methods: current graphs, diamond sums, and Ringel's rotation systems. To prove Theorem \ref{thm-balanced}, we combine a result of Mohar \cite{Mohar-Nonorientable} and two families of nonorientable current graphs in the style of Jungerman \emph{et al.}\ \cite{Jungerman-Hypergraph}. Our proof of Theorem \ref{thm-imbalanced} requires a careful sequence of diamond sums, where it is necessary to know the exact rotations at vertices. We found it easiest to use Ringel's embeddings \cite{Ringel-Nonorientable} here, since he explicitly provided the rotation systems. 

Recently, Theorems \ref{thm-imbalanced} and \ref{thm-balanced} were also proven independently by Lv\ \cite{Lv}. The methods employed in that work bear little similarity to what we present here. 

\section{Embeddings of bipartite graphs}

For background on topological graph theory, especially the theory of current graphs, see Gross and Tucker \cite{GrossTucker}. 

The \emph{nearly complete bipartite graph} $G(m,n,k)$ is the complete bipartite graph $K_{m,n}$ with $k$ independent edges deleted. There are $m$ \emph{white} vertices and $n$ \emph{black} vertices. A vertex is said to be \emph{saturated} if it is adjacent to every vertex of the other color, and \emph{unsaturated}, otherwise.

If $G$ is a graph and $S$ is a surface, an embedding $\phi\colon G \to S$ is said to be \emph{cellular} if $S \setminus \phi(G)$ is a disjoint union of disks, which we call \emph{faces}. Let $N_h$ denote the surface formed by taking the sphere and adding $h \geq 0$ crosscaps. Then, Euler's formula for nonorientable surfaces states that a cellular embedding $\phi\colon G \to N_h$ satisfies
$$|V(G)|-|E(G)|+|F(G, \phi)| = 2 -h,$$
where $F(G,\phi)$ denotes the set of faces of $\phi$. The \emph{nonorientable genus} $\cn(G)$ is the smallest nonnegative integer $h$ such that $G$ has an embedding in $N_h$. For convenience, we allow $\cn(G) = 0$ to avoid making exceptions for planar graphs. For this reason, some authors prefer to call this parameter the \emph{crosscap number} instead. 

If an embedded graph is bipartite and is not $K_2$, then the length of any face is at least 4, and hence $4|F| \leq 2|E|$. Substituting this into Euler's formula yields the lower bound
$$\cn(G) \geq \frac{|E|-2|V|+4}{2},$$
with equality if and only if $G$ has a quadrangular embedding in that surface. For the family of graphs we consider, this lower bound reads
$$\cn(G(m,n,k)) \geq \frac{(m-2)(n-2)-k}{2}.$$
The proofs of Theorems \ref{thm-imbalanced} and \ref{thm-balanced} complete the nonorientable genus formula for the nearly complete bipartite graphs:

\begin{theorem}
Let $m, n, k$ be nonnegative integers such that $m, n \geq 3$ and $k \leq \min(m,n)$. Then,
$$\cn(G(m,n,k)) \geq \left\lceil\frac{(m-2)(n-2)-k}{2}\right\rceil,$$
except for $\cn(G(3,3,3)) = 0$, $\cn(G(5,4,4)) = 2$, and $\cn(G(5,5,5)) = 3$. 
\end{theorem}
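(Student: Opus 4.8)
The plan is to prove the two inequalities separately, with essentially all of the content concentrated in the matching upper bound. For the lower bound I would reuse the Euler-formula argument already given in this section: since $G(m,n,k)$ is simple, bipartite and not $K_2$, every face of a cellular embedding in $N_h$ has length at least $4$, so $|F|\le\lfloor|E|/2\rfloor$ and hence $2-h=|V|-|E|+|F|\le|V|-\lceil|E|/2\rceil$. Substituting $|V|=m+n$ and $|E|=mn-k$ and pulling the integer $2-m-n$ inside the ceiling gives exactly $h\ge\lceil((m-2)(n-2)-k)/2\rceil$; because $\cn$ is a nonnegative integer, nothing more is needed for ``$\ge$''. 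It is worth recording here that equality forces a quadrangular embedding when $(m-2)(n-2)-k$ is even, and a near-quadrangular embedding (all faces quadrilaterals except one hexagon) when it is odd.

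Everything interesting is in showing this bound is attained whenever $(m,n,k)$ is admissible and $G(m,n,k)$ is not one of the three listed graphs. Here I would follow the reduction of Mohar \emph{et al.}\ \cite{Mohar-Orientable, Mohar-Nonorientable}: it suffices to produce the embedding for a distinguished list of ``extremal'' members of the family, because every remaining $G(m,n,k)$ is a subgraph of a graph --- either a complete bipartite $K_{m',n'}$ or an extremal $G(m',n',k')$ --- that is already known to embed in $N_{\lceil((m-2)(n-2)-k)/2\rceil}$, and then the lower bound above shows $G(m,n,k)$ cannot do better. Concretely, when $(m-2)(n-2)-k$ is even one uses a quadrangulation of $G(m,n,k)$ itself or of the supergraph $G(m,n,k-1)$; when $(m-2)(n-2)-k$ is odd one reads the bound off the quadrangulation of $G(m,n,k-1)$, which lives in the same surface. (This is the mechanism behind the $G(4,4,3)\subseteq K_{4,4}$ example in the introduction.) So the whole formula reduces to exhibiting, for each admissible $(m,n,k)$ with $(m-2)(n-2)-k$ even, a quadrangular embedding of $G(m,n,k)$ of the predicted genus.

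The bulk of those quadrangulations are already available --- from Bouchet's diamond sums \cite{Bouchet-Diamond}, Ringel's explicit rotation systems \cite{Ringel-Nonorientable}, the current-graph and hypergraph constructions of \cite{GrossTucker, Jungerman-Hypergraph}, and the work of Mohar \cite{Mohar-Nonorientable} and Mohar \emph{et al.}\ \cite{Mohar-Orientable} --- and together they cover all but two infinite families, namely $G(2n+1,2n,2n)$ and $G(n,n,n)$, which are supplied here by Theorems \ref{thm-imbalanced} and \ref{thm-balanced}. What then remains is a finite check: the parameter values below the ranges $n\ge 3$ and $n\ge 6$ of those theorems (for the balanced family only $G(3,3,3)$, $G(4,4,4)$, $G(5,5,5)$, and for the imbalanced family only $G(5,4,4)$, since $G(3,2,2)$ falls outside $m,n\ge3$), plus confirmation that the three listed graphs really are exceptions. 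For these one argues directly: $G(3,3,3)$ is the planar $6$-cycle and $G(4,4,4)$ is the planar $3$-cube (so these meet or undercut the formula as claimed), while $G(5,4,4)$ and $G(5,5,5)$ admit no (near-)quadrangular embedding in the surface the formula predicts and need one extra crosscap --- each a short ad hoc verification or a citation.

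The main obstacle is precisely the construction of the quadrangular embeddings in the two gap families, i.e.\ Theorems \ref{thm-imbalanced} and \ref{thm-balanced}, which is why they are the substantive results of the paper rather than corollaries; the delicacy, as noted in the introduction, is that these graphs tend to quadrangulate surfaces of odd Euler characteristic and cannot be realized as diamond sums of smaller members of the family, so one must use current graphs and, where the rotation at every individual vertex must be controlled, Ringel's rotation systems. A second, purely bookkeeping obstacle is verifying that the reduction really reaches every admissible triple with the stated value and that the exceptional list is complete: this uses Ringel's exact nonorientable genus of $K_{m,n}$ \cite{Ringel-Nonorientable} together with careful tracking of the parity of $(m-2)(n-2)-k$ and of the boundary cases $k=\min(m,n)$, but introduces no new ideas.
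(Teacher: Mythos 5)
Your proposal is correct and follows essentially the same route as the paper, which presents this theorem as an immediate consequence of the Euler-formula lower bound derived just above it, the reductions and constructions of Mohar \emph{et al.}, and Theorems \ref{thm-imbalanced} and \ref{thm-balanced}; your fleshing-out of the subgraph/parity reduction and the finite list of small and exceptional cases matches what the paper leaves implicit. (Note the displayed ``$\geq$'' in the statement is evidently intended as ``$=$'', as your reading assumes.)
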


The first of these exceptions is isomorphic to a cycle of length 6, and has so few edges that the formula results in a negative number. Mohar \cite{Mohar-Nonorientable} showed that the latter two graphs do not embed in $N_1$ and $N_2$, respectively. 

Given an undirected graph $G$, each edge $e \in E(G)$ induces two directed arcs $e^+$ and $e^-$ pointing in opposite directions. We write $E(G)^+$ to denote the set of all such arcs. A \emph{rotation} of a vertex $v$ is a cyclic permutation of the arcs leaving $v$. When $G$ is simple, it suffices to give a cyclic permutation of the neighbors of $v$. A \emph{(general) rotation system} consists of a rotation for each vertex and an \emph{edge signature} $\lambda\colon E(G) \to \{-1,1\}$, and describes a cellular embedding of the graph on a possibly nonorientable surface. We say that an edge is \emph{twisted} if its signature is $-1$. The set of faces can be traced from the rotation system (see Section 3.2.6 of \cite{GrossTucker}), where traversing a twisted edge reverses the local orientation of a face boundary walk. Different general rotation systems can describe the same embedding: a \emph{vertex flip} reverses the rotation of some vertex and switches the signature of all of its incident edges. A rotation system with twisted edges can still be orientable, unless there is a closed walk that traverses an odd number of twisted edges.

\section{Diamond sum constructions}

Bouchet \cite{Bouchet-Diamond} introduced the \emph{diamond sum} operation for building embeddings of complete bipartite graphs out of smaller embeddings. Later authors \cite{Mohar-Orientable, Mohar-Nonorientable, Ellingham-Tripartite, EvenMapColor} interpreted Bouchet's operation in primal form and generalized it to other families of graphs.

Let $\phi\colon G \to S$ and $\phi'\colon G' \to S'$ be two cellular embeddings, and let $v \in V(G)$ and $v' \in V(G')$ be two vertices of the same degree $d$. As seen in Figure \ref{fig-diamond}, a \emph{diamond sum} of $\phi$ and $\phi'$ at $v$ and $v'$ combines the two embeddings in the following way: let $D$ (resp. $D'$) be a closed disk in $S$ (resp. $S'$) that contains $v$ (resp. $v'$) and its incident edges, such that its neighbors are on the boundary of the disk and no other part of the graph intersects the disk. Then, delete the interiors of $D$ and $D'$ and glue the two embeddings at the resulting boundaries, making sure that each vertex on the boundary of $D$ is identified with a vertex on the boundary of $D'$. There are different ways of forming a diamond sum, depending on which pairs of vertices are identified. 

\begin{figure}[t]
\centering
    \begin{subfigure}[b]{0.99\textwidth}
    \centering
        \includegraphics[scale=1]{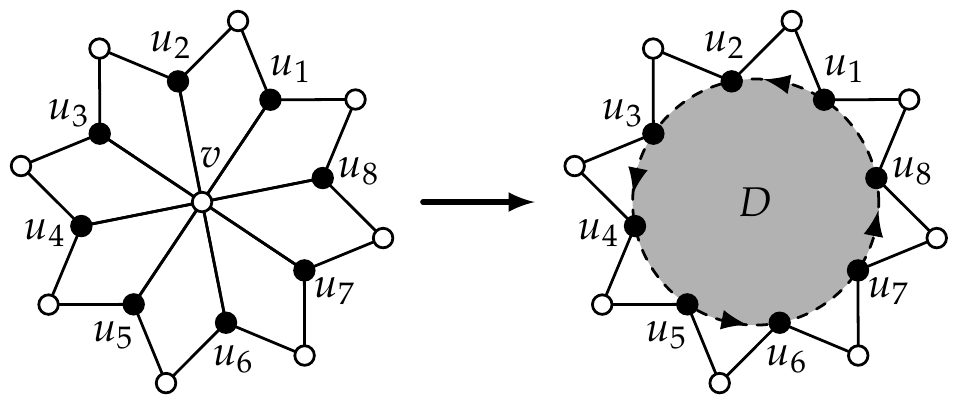}
        \caption{}
        \label{subfig-o34}
    \end{subfigure}
    \begin{subfigure}[b]{0.99\textwidth}
    \centering
        \includegraphics[scale=1]{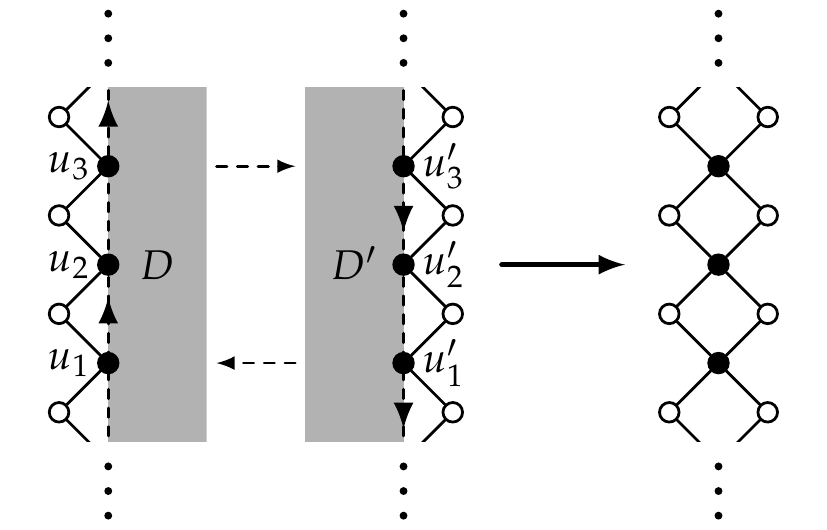}
        \caption{}
        \label{subfig-o40}
    \end{subfigure}
\caption{A diamond sum and the orientation-preserving vertex merging scheme.}
\label{fig-diamond}
\end{figure}

Combinatorially, the diamond sum operation on rotation systems requires a little care. In order to avoid making modifications to the local orientations at vertices, we glue the disks $D$ and $D'$ in the following ``anti-parallel'' manner. Suppose the rotation of $v$ is $(u_1 \,\, u_2 \,\, u_3 \, \dotsc \, u_d)$ and the rotation of $v'$ is $(u_d' \,\, u_{d-1}' \,\, u_{d-2}' \, \dotsc \, u_1')$. Then one possible diamond sum merges $u_i$ with $u_i'$, for each $i = 1, \dotsc, d$, and all others are found by applying a cyclic shift to the vertices on, say, disk $D'$. Furthermore, excising a disk around a vertex $v$ requires that all edges incident with $v$ have signature $1$ in the rotation system. Any rotation system can be modified to satisfy this property by flipping the appropriate neighbors of $v$. 

In this paper, we only consider diamond sums of quadrangular embeddings of nearly complete bipartite graphs, where at least one of the two embeddings is nonorientable. Then, the resulting embedding is also quadrangular and nonorientable. Furthermore, we always excise white vertices and merge black vertices. Under these restrictions, we make an observation about how the diamond sum operation effectively leaves much of the embeddings unchanged:

\begin{observation}
The diamond sum operation preserves the rotations of all non-excised white vertices, up to reversal.
\label{obs-diamond}
\end{observation}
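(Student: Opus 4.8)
The plan is to track precisely which arcs and which cyclic orderings the diamond sum can disturb. Let $w$ be a non-excised white vertex; it lies in one of the two graphs, say $G$ (the argument for $w \in V(G')$ is identical after swapping roles). Since $G$ is bipartite and the excised vertex $v$ is white, $w$ is not adjacent to $v$, so none of the edges deleted when the disk $D$ around $v$ is excised is incident with $w$. The operation also creates no new edge at $w$: identifying a black neighbor $b$ of $v$ with a black neighbor $b'$ of $v'$ only renames one endpoint of edges already present, and every newly adjacent pair produced this way involves one of the merged black vertices, not $w$. Moreover, the diamond sum reorders incident arcs only at the black vertices lying on the boundaries of $D$ and $D'$, whose rotations get spliced together upon merging; by design of the anti-parallel gluing it prescribes no reordering anywhere else. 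Hence the set of arcs leaving $w$, together with its cyclic order --- the rotation of $w$ --- is carried over verbatim.

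Next I would verify that the combinatorial normalizations demanded by the construction do not secretly disturb $w$. Excising $D$ requires every edge at $v$ to carry signature $1$; one achieves this by flipping the appropriate neighbors of $v$, all of which are black, and a vertex flip at a black vertex reverses only that vertex's rotation and toggles signatures on its incident edges --- it never alters the rotation of a white vertex. The same applies to the preparation of $\phi'$ at $v'$. So after these flips $w$ still has its original rotation (some edges at $w$ may have changed signature, which is immaterial to the statement).

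The only remaining point is the phrase ``up to reversal.'' To glue the two boundary circles in the anti-parallel way described in the paragraph preceding the statement --- matching $u_i$ with $u_i'$ when the rotation of $v$ is $(u_1\ u_2\ \dots\ u_d)$ and that of $v'$ is $(u_d'\ u_{d-1}'\ \dots\ u_1')$ --- it may first be necessary to replace $\phi'$ by its mirror image, i.e.\ to reverse the rotation of every vertex of $G'$ while keeping all edge signatures, so that the rotation of $v'$ has the required form relative to that of $v$. Since the mirror of any surface is homeomorphic to it, this is a legitimate move, and it reverses the rotations of all white vertices of $G'$ at once. This global mirroring is the sole operation in the whole construction that can change a surviving white vertex's rotation, so every non-excised white vertex ends up with its rotation preserved up to this single possible reversal.

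I expect the only real content, beyond this bookkeeping, to be the verification in the middle paragraph: that each normalization forced by the diamond sum --- trivializing the signatures at $v$ and $v'$, and bringing the two rotations into anti-parallel alignment --- is realized by vertex flips at \emph{black} vertices together with, at worst, a global mirror of one of the two embeddings, and never by flipping an individual white vertex. That this is always possible is exactly what the ``excise a white vertex, merge black vertices'' restriction and the anti-parallel gluing convention are designed to guarantee; the orientation-preserving merging scheme of Figure \ref{fig-diamond} is the picture behind it.
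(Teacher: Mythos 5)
Your argument is correct and matches the paper's reasoning: the paper states this Observation without proof, treating it as immediate from the combinatorial description of the diamond sum (only the excised white vertices and the merged black vertices have their arc sets or rotations altered, the signature-normalizing flips occur only at black neighbors of $v$ and $v'$, and the reversal accounts for possibly mirroring one embedding to realize the anti-parallel gluing). Your write-up simply makes that implicit bookkeeping explicit, and does so accurately.
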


When taking a diamond sum of quadrangular embeddings of $G(m_1, n, k_1)$ and $G(m_2, n, k_2)$, the underlying graph is another nearly complete bipartite graph $G(m_1+m_2-2, n, k_1+k_2)$ if two conditions are satisfied:
\begin{itemize}
\item the two deleted white vertices are saturated, and
\item there is a choice of gluing where no unsaturated black vertices are merged together.
\end{itemize}
These conditions are visualized in Figure \ref{fig-dsum}, where a diamond sum of some embeddings of $G(4,6,2)$ and $G(5,6,2)$ results in one of $G(7,6,4)$. 

\begin{figure}[]
\centering
\includegraphics[scale=1]{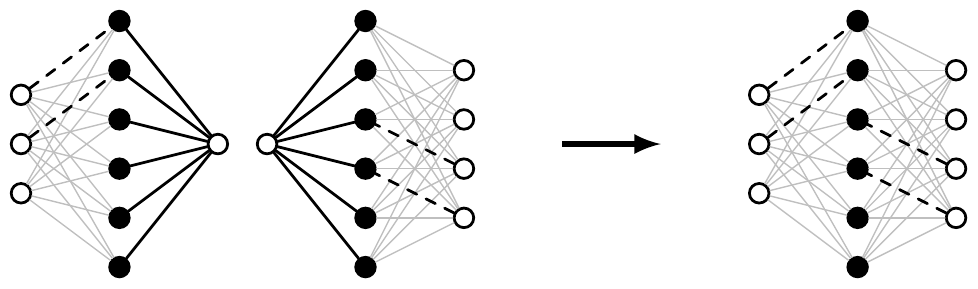}
\caption{A schematic of a diamond sum operation.}
\label{fig-dsum}
\end{figure}

The latter condition suggests that one might need information about the actual rotation system to ensure a valid gluing. Mohar \emph{et al.}\ \cite{Mohar-Orientable, Mohar-Nonorientable} circumvented this problem by only considering diamond sums where, say, $k_1 < n$ and $k_2 \leq 1$. In this situation, one can choose a gluing where the one unsaturated black vertex in the second graph is merged with any saturated black vertex in the first graph. When both graphs have many unsaturated black vertices, then such a choice might not exist. 

The graph $G(2n+1, 2n, 2n)$ can be formed by starting with $K_{n+1,2n} = G(n+1, 2n, 0)$ and repeatedly taking diamond sums with $G(3, 2n, 2)$. We first explore the space of embeddings of the latter graph.

\begin{lemma}
For any $n \geq 2$ and odd $p < 2n-2$, there exists a quadrangular embedding of $G(3, 2n, 2)$ where the two unsaturated black vertices are separated by $p$ vertices in the rotation of the one saturated white vertex. 
\label{lem-k3n}
\end{lemma}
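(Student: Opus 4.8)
The plan is to exhibit the embedding directly through its face set. Write $a,b,c$ for the white vertices with $c$ saturated, and let the black vertices be $x$ (the non-neighbour of $a$), $y$ (the non-neighbour of $b$), and the $2n-2$ saturated vertices $v_1,\dotsc,v_p,w_1,\dotsc,w_q$, where $q=2n-2-p$; note that $q$ is odd. I would prescribe the rotation at $c$ to be $(x \,\, v_1 \,\, \dotsc \,\, v_p \,\, y \,\, w_1 \,\, \dotsc \,\, w_q)$; this is what produces the claimed separation. Each of the $2n$ corners of this rotation must be spanned by a quadrilateral $(c\ \alpha\ \gamma\ \beta)$ with $\alpha,\beta$ its two black vertices and $\gamma\in\{a,b\}$; the two corners meeting $x$ force $\gamma=b$ and the two meeting $y$ force $\gamma=a$, since $x\not\sim a$ and $y\not\sim b$. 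For the remaining corners I would choose $\gamma$ so that the colours alternate between $a$ and $b$ around the rotation, except for the two unavoidable ``defects'' (consecutive corners of equal colour) forced next to $x$ and next to $y$. This is where oddness of $p$ is used: such a cyclic $2$-colouring of the $2n$ corners, with defects only in those two places, exists if and only if $p$ (equivalently $q$) is odd. The result is a set of $2n$ distinct quadrilateral faces, all incident with $c$.

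I would then pin down the remaining faces by counting edge--face incidences. After the $2n$ faces above, the edges at $c$ and the edges $ay$, $bx$ are used twice, but for each of the $2n-2$ saturated black vertices $z$ the edges $az$ and $bz$ still need one more face each. Every remaining face avoids $c$, $x$, and $y$ (a quadrilateral through one of these is already on the list), so it has the form $(a\ z\ b\ z')$; counting then shows there are exactly $n-1$ of them and that they form a perfect matching $M$ of $\{v_1,\dotsc,v_p,w_1,\dotsc,w_q\}$. For any such $M$ one checks routinely that every edge lies in exactly two faces and that the faces close up into a single rotation at $c$, at $x$, at $y$, and at each saturated black vertex (which has degree $3$, its three faces realizing the three distinct pairs of incident edges). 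Thus the face set determines a cellular --- and, all faces being quadrilaterals, quadrangular --- embedding as soon as the rotations at $a$ and $b$ are single cycles.

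That last condition is the crux and the main obstacle. Tracking the alternating colouring, one finds that the faces through $c$ leave the rotation at $a$ (after suppressing the degree-two vertex $y$) equal to the perfect matching $M_{\mathrm{odd}}$ of the even cycle $C_{2n-2}$ on the positions $(v_1,\dotsc,v_p,w_1,\dotsc,w_q)$, and the rotation at $b$ equal to the complementary matching $M_{\mathrm{even}}$. So the rotations at $a$ and $b$ are single cycles exactly when $M\cup M_{\mathrm{odd}}$ and $M\cup M_{\mathrm{even}}$ are each a single $(2n-2)$-cycle, and I need one perfect matching $M$ --- which must then use no edge of $C_{2n-2}$ --- working for both. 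I would take $M$ to be induced by a reflection of the cyclic sequence through two antipodal positions, those two fixed positions being matched to each other; checking that $M\cup M_{\mathrm{odd}}$ and $M\cup M_{\mathrm{even}}$ are connected is then a routine trace around the cycle. Finally, the rotation at $c$ read off the face set is $(x \,\, v_1 \,\, \dotsc \,\, v_p \,\, y \,\, w_1 \,\, \dotsc \,\, w_q)$, so $x$ and $y$ are separated by $p$ vertices, as required; the degenerate case $n=2$, where $C_{2n-2}$ has only two vertices, is handled by a direct check.
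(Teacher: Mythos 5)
Your proposal is correct in outline, but it takes a genuinely different route from the paper's. The paper starts from an arbitrary quadrangular embedding of $K_{3,2n-2}$ (which exists since $2n-2$ is even), observes that around any white vertex $a$ the incident quadrilaterals alternate between $b$-faces and $c$-faces because the graph is simple, and then subdivides two suitably chosen faces by inserting the two degree-two black vertices; that alternation is exactly what makes every achievable separation odd and every odd separation achievable. You instead build the face set of $G(3,2n,2)$ from scratch and verify the band-decomposition conditions by hand. The two constructions yield the same combinatorial structure --- your corner $2$-colouring of the rotation at the saturated vertex is the paper's alternation read at a different vertex, and your matching faces $(a\ z\ b\ z')$ correspond to the $n-1$ faces of $K_{3,2n-2}$ not incident with the excised-into vertex --- but the paper gets the single-cycle condition at the other two white vertices for free from the pre-existing embedding, whereas you must actually solve the matching problem of finding $M$ with both $M\cup M_{\mathrm{odd}}$ and $M\cup M_{\mathrm{even}}$ Hamiltonian. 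Your reflection matching does work: it is disjoint from the cycle (the diagonal $\{i,-i\}$ is never a cycle edge, and the axis pair $\{0,n-1\}$ is an edge only when $n=2$, your deferred case), and the trace closes up into a single cycle in the small cases and by the evident ``spiral'' pattern in general. One remark that halves your remaining verification: a reflection through two antipodal vertices fixes $M$ but interchanges $M_{\mathrm{odd}}$ with $M_{\mathrm{even}}$, so the two unions are isomorphic graphs and only one of them needs to be traced. What your approach buys is self-containedness --- no appeal to known quadrangular embeddings of $K_{3,2n-2}$ --- at the cost of this extra bookkeeping; the paper's version is considerably shorter.
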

\begin{proof}
Consider any quadrangular embedding of $K_{3, 2n-2} = G(3, 2n-2, 0)$ and let $a, b, c$ be the names of the white vertices. Because the graph is simple, the other white vertex in the quadrilateral faces incident with $a$ alternates between $b$ and $c$ as we go around the rotation of $a$, as seen in Figure \ref{fig-g3n2}. Additional degree 2 black vertices can be inserted into any one of these quadrilateral faces. We add two such vertices, one incident with $b$, and the other incident with $c$ to form a quadrangular embedding of the graph $G(3, 2n, 2)$. Vertex $a$ is saturated, and by an appropriate choice of faces, the two new vertices can be separated by any odd number of vertices in the rotation of $a$. 
\end{proof}

\begin{figure}[]
\centering
\includegraphics[scale=1]{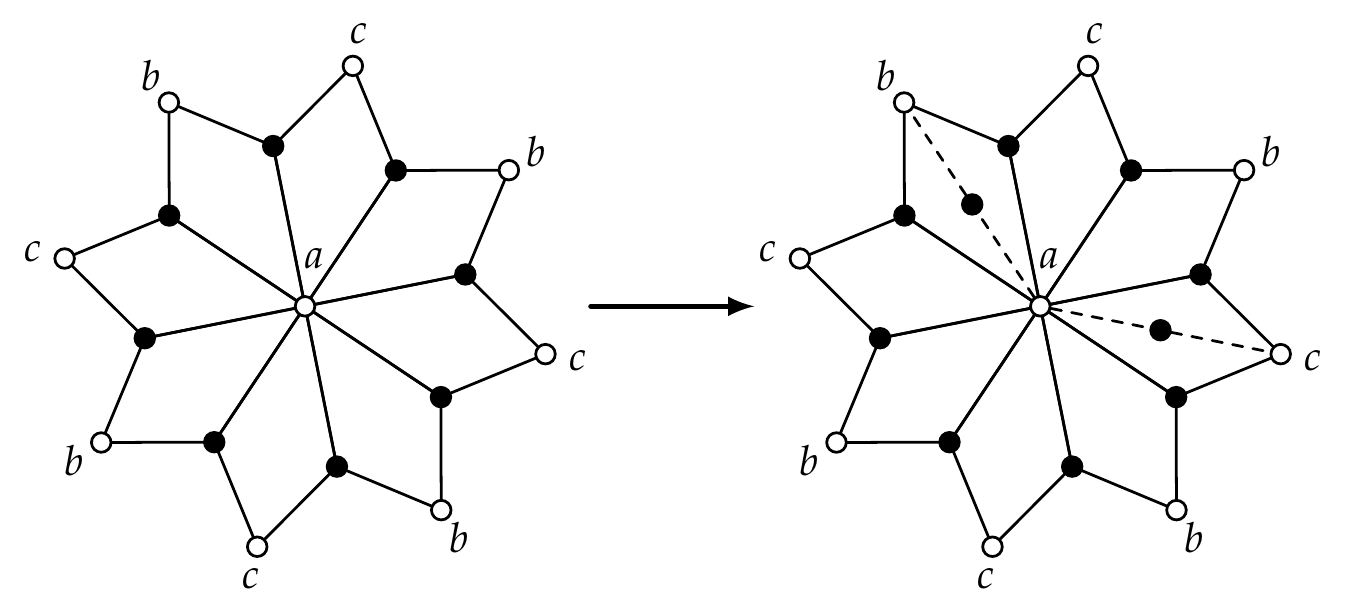}
\caption{Constructing $G(3,10,2)$ from $G(3,8,0)$ with $p = 3$ (or 5).}
\label{fig-g3n2}
\end{figure}

We say that a quadrangular embedding of $K_{m,2n}$, $m \geq n$, has an \emph{odd pairing} if there is a partitioning of the $2n$ black vertices into pairs, such that each pair can be assigned a distinct white vertex satisfying the following property: if the pair $\{i,j\}$ is mapped to vertex $\alpha$, then $i$ and $j$ are separated by an odd number of vertices in the rotation of $\alpha$. The intuition behind this definition is to determine which white vertices can be used to ``desaturate'' a specific pair of black vertices using an embedding constructed in Lemma \ref{lem-k3n}. But once a white vertex is excised by a diamond sum, it cannot be used again. 

\begin{lemma}
If some nonorientable quadrangular embedding of $K_{m,2n}$, $m \geq n$, has an odd pairing, then there exists a nonorientable quadrangular embedding of $G(m+n, 2n, 2n)$.
\label{lem-odd}
\end{lemma}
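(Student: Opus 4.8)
The plan is to build $G(m+n, 2n, 2n)$ by starting from the given quadrangular embedding of $K_{m,2n}$ and performing $n$ successive diamond sums, one with each of $n$ disjoint copies of the embeddings of $G(3, 2n, 2)$ produced by Lemma~\ref{lem-k3n}. Each diamond sum excises a white vertex from the current graph and from a copy of $G(3,2n,2)$, merging the two saturated white vertices' worth of structure and desaturating one pair of black vertices. After $n$ such operations we will have deleted $n$ white vertices and introduced $3n - n = 2n$ new white vertices (net change $+2n$, going from $m$ to $m+n$ after also accounting for the two excised whites per step — more precisely, $G(m_1, 2n, k_1) \mathbin{\square} G(3, 2n, 2)$ has $m_1 + 3 - 2 = m_1 + 1$ white vertices and $k_1 + 2$ deleted edges), so iterating $n$ times takes $K_{m,2n}$ to a graph with $m+n$ white vertices and $2n$ deleted independent edges, i.e.\ $G(m+n, 2n, 2n)$, provided every step is a valid diamond sum of nearly complete bipartite graphs.

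The key to validity is the second bullet condition recalled before Lemma~\ref{lem-odd}: at each step we must merge the unsaturated black vertices of the $G(3,2n,2)$ copy so that no two unsaturated black vertices of the growing graph coincide. This is exactly what the odd pairing buys us. Fix an odd pairing of the $2n$ black vertices of $K_{m,2n}$, giving $n$ pairs $\{i_t, j_t\}$ assigned to distinct white vertices $\alpha_1, \dotsc, \alpha_n$. At step $t$, let $p_t$ be the odd number of vertices separating $i_t$ and $j_t$ in the rotation of $\alpha_t$; since the embedding is simple and bipartite this separation is odd and at most $2n-3 < 2n-2$, so Lemma~\ref{lem-k3n} furnishes a quadrangular embedding of $G(3,2n,2)$ whose two unsaturated black vertices are separated by $p_t$ vertices in the rotation of its saturated white vertex. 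We then take the diamond sum excising $\alpha_t$ and that saturated white vertex, gluing so that the two unsaturated black vertices of the $G(3,2n,2)$ copy land on $i_t$ and $j_t$. The matching of the $2n$ boundary vertices is forced (up to the cyclic shifts allowed in a diamond sum) by the requirement that $i_t$ and $j_t$ be separated by $p_t$ vertices on both sides, which is why the parity condition is precisely the right one; and because the pairs are disjoint and the $\alpha_t$ are distinct, the desaturated black vertices accumulate without collision, so after step $t$ exactly $2t$ black vertices are unsaturated, and by Observation~\ref{obs-diamond} the rotations of the untouched white vertices (in particular the still-unused $\alpha_{t'}$, $t' > t$) are preserved up to reversal, so the remaining pairs are still realizable. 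Nonorientability and quadrangularity are preserved throughout by the standing assumptions on diamond sums of such embeddings stated in the text.

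The main obstacle is the gluing bookkeeping in the induction: one must check that the diamond sum can indeed be performed with $i_t$ and $j_t$ as the images of the two unsaturated black vertices of the $G(3,2n,2)$ copy. Because a diamond sum identifies the $d = 2n$ neighbors of the excised white vertex on one disk with those on the other in anti-parallel order up to a cyclic shift, the only freedom is the choice of shift; the two constraints ``$i_t$ is an image of an unsaturated vertex'' and ``$j_t$ is an image of the other unsaturated vertex'' are simultaneously satisfiable precisely because the cyclic distance from $i_t$ to $j_t$ in the rotation of $\alpha_t$ equals the cyclic distance between the two unsaturated vertices in the rotation of the $G(3,2n,2)$ copy's saturated white vertex — both are $p_t$ (reading one way) and $2n - 2 - p_t$ (the other way), and matching these, using that reversal is allowed by Observation~\ref{obs-diamond}, pins down the gluing. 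Everything else — the count of white vertices, black vertices, and deleted edges, and the fact that the deleted edges remain independent since each new unsaturated black vertex loses exactly its one edge to the excised white vertex — is a routine verification I would relegate to a sentence. Hence $G(m+n, 2n, 2n)$ has the desired nonorientable quadrangular embedding.
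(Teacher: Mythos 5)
Your proposal is correct and follows essentially the same route as the paper: iterate $n$ diamond sums with the $G(3,2n,2)$ embeddings of Lemma~\ref{lem-k3n}, using the odd pairing to select disjoint pairs at distinct white vertices and Observation~\ref{obs-diamond} to keep the rotations of the not-yet-used $\alpha_{t'}$ intact. One small slip worth noting: the oddness of the separation $p_t$ is not a consequence of the embedding being simple and bipartite (adjacent entries in a rotation have separation $0$, for instance) --- it is exactly what the definition of an odd pairing guarantees, and $p_t$ can always be taken at most $2n-3$ since the two arcs between $i_t$ and $j_t$ have lengths summing to $2n-2$.
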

\begin{proof}
We define a sequence of embeddings $\phi_0, \phi_1, \dotsc, \phi_n$, where $\phi_0$ is the initial embedding of $K_{m,2n}$. Suppose the odd pairing maps $\{i_k, j_k\} \mapsto \alpha_k$, for $k = 1, \dotsc, n$. For each $\phi_{k-1}$, consider the vertices $i_k$, $j_k$, and $\alpha_k$. Following Lemma \ref{lem-k3n}, there is a quadrangular embedding of $G(3,2n,2)$ where the two unsaturated black vertices, call them $i$ and $j$, are separated by the same number of vertices as $i_k$ and $j_k$ in the rotation of $\alpha_k$. By Observation \ref{obs-diamond}, $i_k$ and $j_k$ are still the same distance apart in the rotation of $\alpha_k$ in $\phi_{k-1}$ as in the original embedding $\phi_0$. To obtain $\phi_k$, we take the diamond sum of $\phi_{k-1}$ and this embedding of $G(3,2n,2)$ at vertices $\alpha_k$ and $a$, where $i$ and $j$ are glued to $i_k$ and $j_k$ (in either order). Each embedding $\phi_k$ is nonorientable, quadrangular, and of the graph $G(m+k,2n,2k)$. Thus, $\phi_n$ is the desired embedding. 
\end{proof}

Ringel \cite{Ringel-Nonorientable} gave the first construction for nonorientable quadrangular embeddings of $K_{m,n}$, but the rotation systems are missing edge signatures (instead, Ringel writes down all of the faces explicitly). Fortunately, the edge signature is irrelevant in the definition of an odd pairing and the construction in Lemma \ref{lem-odd}: the property of being separated by an odd number of vertices in some rotation still holds after any number of vertex flips. 

\begin{proof}[Proof of Theorem \ref{thm-imbalanced}]
We find an odd pairing in each of Ringel's \cite{Ringel-Nonorientable} rotation systems of $K_{n+1, 2n}$, $n \geq 3$. Label the black vertices $1, 2, \dotsc, 2n$. The rotation of all but two of the white vertices is simply 
$$\begin{array}{rlllllllllllllllllllllllllllll}
(1 & 2 & 3 & 4 & \dotsc & 2n{-}1 & 2n).
\end{array}$$
In an odd pairing, any pair of black vertices of the same parity can be assigned to such a vertex. 

When $n$ is even, the rotations of the other two vertices, call them $a$ and $b$, are 
$$\begin{array}{rlllllllllllllllllllllllllllll}
a. & (1 & 2 & 2n & 2n{-}1 & 3 & 4 & 2n{-}2 & 2n{-}3 & \dotsc & n{+}2 & n{+}1) \\
b. & (1 & 2n & 2 & 3 & 2n{-}1 & 2n{-}2 & 4 & 5 & \dotsc & n & n{+}1) \\
\end{array}$$
Pair the black vertices as follows: 
$$\{1, 3\}, \{2, 4\}, \{5, 7\}, \{6, 8\}, \dotsc, \{2n{-}3, 2n{-}1\}, \{2n{-}2, 2n\}.$$
Assign the pair $\{1, 3\}$ to vertex $a$, and assign all the other pairs arbitrarily to any of the remaining vertices except $b$. 

When $n$ is odd, the rotations of vertices $a$ and $b$ are both
$$\begin{array}{rlllllllllllllllllllllllllllll}
(1 & 3 & 2 & 4 & 5 & 6 & \dotsc & 2n{-}1 & 2n)
\end{array}$$
where only $2$ and $3$ are swapped. For this case, our pairs are 
$$\{1, 2\}, \{3, 5\}, \{4, 6\}, \{7, 9\}, \{8, 10\}, \dotsc, \{2n{-}3, 2n{-}1\}, \{2n{-}2, 2n\}.$$
Like before, assign the first pair to vertex $a$, and all others to the vertices besides $b$. 

By Lemma \ref{lem-odd}, these odd pairings produce nonorientable quadrangular embeddings of $G(2n+1, 2n, 2n)$. 
\end{proof}

If we started with $K_{n,2n}$ instead of $K_{n+1,2n}$, we would obtain quadrangular embeddings of $G(2n,2n,2n)$. For $n \geq 4$, Mohar \cite{Mohar-Nonorientable} already outlined an approach to finding such embeddings (see Lemma \ref{lem-mohar} below). The $n = 2$ case is the cube graph, which is planar, so the $n = 3$ case remains to be solved. 

\begin{proposition}
$G(6, 6, 6)$ has a nonorientable quadrangular embedding. 
\label{prop-g666}
\end{proposition}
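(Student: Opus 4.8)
The plan is to imitate the proof of Theorem~\ref{thm-imbalanced}, but starting from $K_{3,6}$ in place of a $K_{n+1,2n}$: I would produce a nonorientable quadrangular embedding of $K_{3,6}$ that admits an odd pairing and then invoke Lemma~\ref{lem-odd}. Here $m = 3$ and $2n = 6$, so the hypothesis $m \ge n$ holds (with equality), and the lemma outputs a nonorientable quadrangular embedding of $G(m+n,2n,2n) = G(6,6,6)$, which is exactly the claim. Thus the whole proposition reduces to a single finite check: exhibit one good embedding of $K_{3,6}$ together with the required partition of its six black vertices into three pairs, one per white vertex.

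For the embedding, I would first note (Euler's formula) that a quadrangular embedding of $K_{3,6}$ has Euler characteristic $9-18+9 = 0$, hence lies on the torus or on $N_2$; since $K_{3,6}$ has orientable genus $1$, a nonorientable quadrangular embedding must lie on $N_2$, so it suffices to build one such embedding. One could extract it from Ringel's \cite{Ringel-Nonorientable} list, but it is easy to construct directly: in any quadrangular embedding of $K_{3,n}$ the two ``other'' white vertices occurring in the faces around a fixed white vertex are forced (by the degree-$3$ black vertices) to alternate, so the embedding is essentially pinned down by the three white rotations. Concretely I would take the white rotations
$$a\colon(1\ 2\ 3\ 4\ 5\ 6),\qquad b\colon(1\ 2\ 4\ 3\ 6\ 5),\qquad c\colon(2\ 3\ 6\ 1\ 5\ 4),$$
verify that the resulting nine quadrilaterals form a legitimate embedding (each edge in two faces, each vertex link a single cycle), and check that it is nonorientable by exhibiting an inconsistency in any attempt to coherently orient the faces through $b$.

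With this embedding in hand I would then check that
$$\{1,3\}\mapsto a,\qquad\{4,6\}\mapsto b,\qquad\{2,5\}\mapsto c$$
is an odd pairing: in the rotation of $a$ the vertices $1$ and $3$ are separated by the single vertex $2$; in the rotation of $b$ the vertices $4$ and $6$ are separated by the single vertex $3$; and in the rotation of $c$ the vertices $2$ and $5$ are separated by the single vertex $4$. Applying Lemma~\ref{lem-odd} then completes the proof.

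The main difficulty is that the two demands on the $K_{3,6}$ embedding cannot be arranged independently: the cyclic positions of the black vertices in the white rotations govern both whether the embedding is nonorientable (many quadrangular $K_{3,6}$ embeddings are toroidal) and whether three pairwise-disjoint odd-separated pairs, one per white vertex, can be extracted. So the only real work is choosing the white rotations to be simultaneously ``twisted enough'' and ``spread out enough''; the rotations above were selected with both constraints in mind. Everything else — confirming the face structure, the orientability obstruction, and the hypotheses of Lemma~\ref{lem-odd} — is routine.
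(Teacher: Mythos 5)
Your proposal is correct and follows essentially the same route as the paper: both reduce the proposition to exhibiting a nonorientable quadrangular embedding of $K_{3,6}$ admitting an odd pairing and then invoke Lemma~\ref{lem-odd} with $m=n=3$; the paper simply takes the embedding from Ringel's list (rotations $a\colon(1\,2\,6\,5\,3\,4)$, $b\colon(1\,6\,2\,3\,5\,4)$, $c\colon(1\,2\,3\,4\,5\,6)$ with pairing $\{1,3\}\mapsto a$, $\{2,5\}\mapsto b$, $\{4,6\}\mapsto c$) rather than building one by hand. I checked that your rotations do yield a valid quadrangular embedding of $K_{3,6}$ in the Klein bottle (the nine faces are $\{a,b,1,2\}$, $\{a,b,3,4\}$, $\{a,b,5,6\}$, $\{a,c,2,3\}$, $\{a,c,4,5\}$, $\{a,c,6,1\}$, $\{b,c,2,4\}$, $\{b,c,3,6\}$, $\{b,c,1,5\}$, and the forced orientations of the faces around edge $b4$ clash), and that your pairing is odd, so the verification you deferred does go through.
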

\begin{proof}
The rotations of the white vertices in Ringel's embedding of $K_{3,6}$ are:
$$\begin{array}{rlllllllllllllllllllllllllllll}
a. & (1 & 2 & 6 & 5 & 3 & 4) \\
b. & (1 & 6 & 2 & 3 & 5 & 4) \\
c. & (1 & 2 & 3 & 4 & 5 & 6).
\end{array}$$
We use Lemma \ref{lem-odd} on the odd pairing $\{1, 3\} \mapsto a, \{2,5\} \mapsto b, \{4,6\} \mapsto c$. 
\end{proof}

\section{Current graph constructions}

We assume that the reader is familiar with current graph constructions. For more details, see Sections 4.4 and 6.1.3 of Gross and Tucker \cite{GrossTucker}. Our constructions are based on the family of current graphs in Jungerman \emph{et al.}\ \cite{Jungerman-Hypergraph}.

A \emph{current graph} $(\phi, \alpha)$ consists of a graph embedding $\phi\colon G \to S$ and an arc-labeling $\alpha\colon E(G)^+ \to \Gamma$ with elements from a group $\Gamma$. If $\lambda\colon E(G) \to \{-1,1\}$ is the edge signature, the arc-labeling $\alpha$ satisfies $\alpha(e^+) = -\lambda(e)\alpha(e^-)$ for every edge $e \in E(G)$. Given a face boundary walk $(e^\pm_1, e^\pm_2, e^\pm_3, \dotsc)$ of the embedding $\phi$, its \emph{log} replaces each $e^\pm_i$ with $\alpha(e^\pm_i)$ if the walk is currently in its original orientation or $-\alpha(e^\pm_i)$, otherwise.

When $n$ is odd, the nearly complete bipartite graphs $G(n,n,n)$ can be expressed as a Cayley graph on $\mathbb{Z}_{2n}$, where the generating set consists of the odd numbers $1, 3, \dotsc, n-2$. Then, the white and black vertices are, say, the even and odd vertices, respectively. We describe ``index 2'' current graphs that satisfy a standard set of properties:
\begin{itemize}
\item[(C1)] The embedding has two faces whose boundary walks are labeled $[0]$ and $[1]$. 
\item[(C2)] Every vertex has degree 4 and satisfies Kirchhoff's current law.
\item[(C3)] In each log, each element in the list $\pm 1, \pm 3, \dotsc, \pm(n-2) \in \mathbb{Z}_{2n}$ appears exactly once, and no other elements appear. 
\item[(C4)] Each edge is incident with both faces.
\end{itemize}
When these properties are satisfied, the \emph{derived embedding} of the current graph, which has vertex set $\mathbb{Z}_{2n}$ and is generated from the logs of the face boundary walks, is a quadrangular embedding of $G(n,n,n)$. Figures \ref{fig-g7}, \ref{fig-g-case1}, and \ref{fig-g-case3} describe such current graphs for each odd integer $n \geq 7$. The rotations of the vertices in the current graphs' embeddings match the way the current graphs are being drawn in the plane, and the orientation of each face boundary walk is indicated by a small fragment where the walk is in normal, unreversed behavior. 

\begin{figure}[]
\centering
\includegraphics[scale=1]{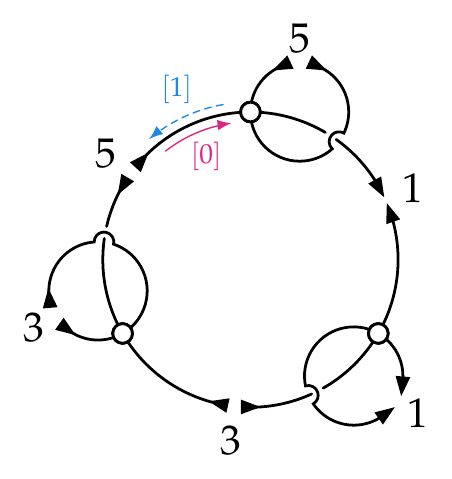}
\caption{Current graph with group $\mathbb{Z}_{14}$.}
\label{fig-g7}
\end{figure}

\begin{figure}[]
\centering
\includegraphics[scale=1]{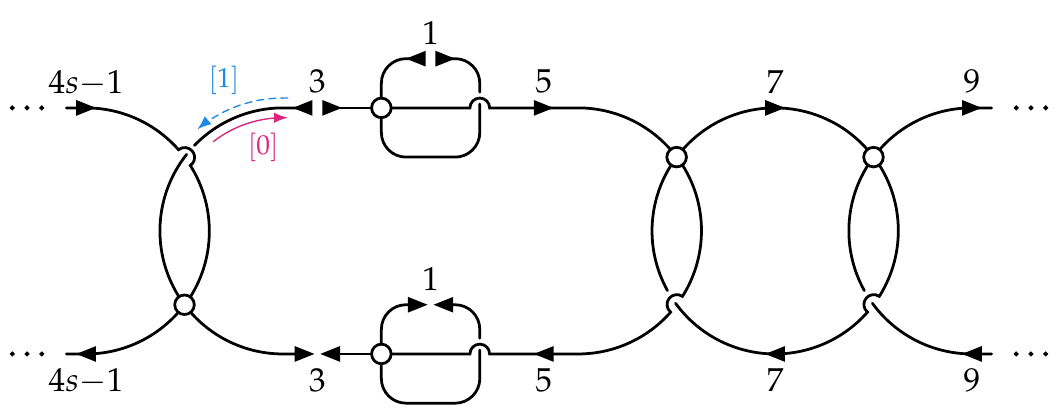}
\caption{Current graph with group $\mathbb{Z}_{8s+2}$, $s \geq 2$.}
\label{fig-g-case1}
\end{figure}

\begin{figure}[]
\centering
\includegraphics[scale=1]{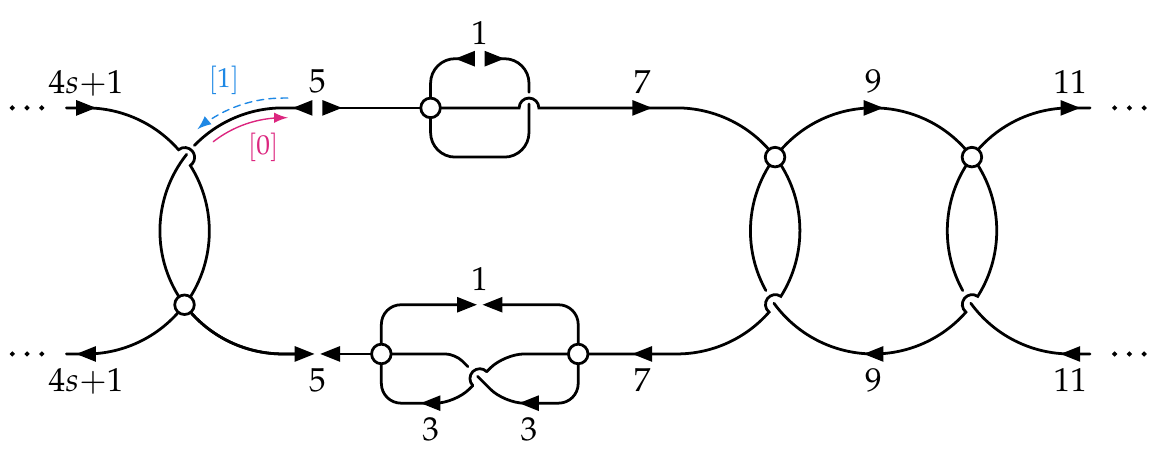}
\caption{Current graph with group $\mathbb{Z}_{8s+6}$, $s \geq 2$.}
\label{fig-g-case3}
\end{figure}

Jungerman \emph{et al.}\ \cite{Jungerman-Hypergraph} announced, but did not prove, the existence of orientable quadrangular embeddings of $G(4s, 4s, 4s)$. A small modification to their family of current graphs would suffice, but we follow a different construction due to Mohar \cite{Mohar-Nonorientable}. With a more complicated variant of the diamond sum operation, Mohar used the toroidal embedding of $G(5,5,5)$ to construct orientable and nonorientable quadrangular embeddings of $G(4s, 4s, 4s)$, for all $k \geq 2$. He observed that replacing ``5'' with ``7'' would yield most of a different residue:

\begin{lemma}[Mohar \cite{Mohar-Nonorientable}, Proposition 1]
If $G(7,7,7)$ has a quadrangular embedding, then $G(4s+2, 4s+2, 4s+2)$ has a nonorientable quadrangular embedding, for all $s \geq 2$. 
\label{lem-mohar}
\end{lemma}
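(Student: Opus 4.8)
The statement is conditional, so I fix once and for all a quadrangular embedding $\psi$ of $G(7,7,7)$. I would begin by noting that for these graphs the word ``quadrangular'' already forces ``nonorientable.'' A quadrangular embedding of $G(m,m,m)$ has $\tfrac12(m^2-m)$ faces, hence Euler characteristic $\tfrac12 m(5-m)$, and therefore lies on $N_h$ with $h=\tfrac12(m-1)(m-4)$; for $m=4s+2$ this is the \emph{odd} number $(4s+1)(2s-1)$, while a closed orientable surface has even Euler characteristic. (In particular $\psi$ itself is nonorientable, an embedding in $N_9$.) So it suffices to produce, for each $s\geq 2$, a quadrangular embedding of $G(4s+2,4s+2,4s+2)$.

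The plan is to replay Mohar's construction of the family $\{G(4s,4s,4s):s\geq 2\}$ from the toroidal embedding of $G(5,5,5)$, substituting $\psi$ throughout --- which is the sense in which one ``replaces $5$ by $7$.'' The first step is to isolate the exact hypotheses that construction places on its input gadget: essentially that it is a quadrangular embedding, that its two colour classes have equal size, and that (as in Lemma~\ref{lem-k3n} and the proof of Theorem~\ref{thm-imbalanced}) some vertex has a local rotation of the shape needed to ``desaturate'' a pair of vertices relative to the deleted matching. These are bipartite--combinatorial properties shared by \emph{any} quadrangular embedding of $G(7,7,7)$ --- a bipartite quadrangulation automatically alternates colours around each vertex --- so every step of Mohar's construction, an iterated ``generalized diamond sum'' (the cut-a-disk-and-reglue operation of Section~3, but with disks large enough that both colour classes grow), survives verbatim with the parameter $5$ bumped to $7$, shifting the output family to $\{G(4s+2,4s+2,4s+2):s\geq 2\}$. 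One then runs the induction on $s$: the base case builds $G(10,10,10)$ directly from $\psi$, and the inductive step glues one more copy of the gadget onto $G(4s+2,4s+2,4s+2)$ to obtain $G(4s+6,4s+6,4s+6)$, checking at each stage that the result stays quadrangular and that its underlying graph is exactly a complete bipartite graph with a \emph{full} matching removed.

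The main obstacle is precisely the subtlety flagged in Section~3: a generalized diamond sum can identify pairs of unsaturated vertices, and to keep the underlying graph equal to $G(4s+2,4s+2,4s+2)$ one must choose a gluing in which only the ``right'' vertices are merged, which forces one to track the actual rotation system rather than the embedding up to homeomorphism. So the real content is (a) reading the needed rotation data off $\psi$ --- for which, as in the proof of Theorem~\ref{thm-imbalanced}, an explicit (Ringel-style or current-graph-derived) rotation system of $G(7,7,7)$ is the natural choice --- and (b) verifying that the ``$7$'' analogue of Mohar's alignment of the two boundary cycles still admits a valid, quadrilaterality-preserving gluing at every step of the induction. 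Once those checks go through, the lemma follows.
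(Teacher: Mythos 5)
First, a point of orientation: the paper does not prove this lemma at all --- it is quoted as Mohar's Proposition~1 and used as a black box, the surrounding text only remarking that Mohar's construction of $G(4s,4s,4s)$ from the toroidal $G(5,5,5)$ goes through with a ``$7$'' in place of a ``$5$''. So your proposal must be judged as an attempt to reprove Mohar's result from scratch. The one part you actually carry out is correct and worth keeping: a quadrangular embedding of $G(m,m,m)$ forces genus $h=\frac{1}{2}(m-1)(m-4)$, which is odd for $m=4s+2$, so nonorientability comes for free (the same parity argument the paper applies to the $4s+3$ residue), and the task reduces to producing quadrangular embeddings of $G(4s+2,4s+2,4s+2)$ from one of $G(7,7,7)$.

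Everything after that is a plan rather than a proof, and the plan has concrete holes. You never define the ``generalized diamond sum'' you invoke; the diamond sum of Section~3 excises a single white vertex and therefore leaves the black side of the bipartition entirely fixed, so growing both classes simultaneously requires a genuinely different excision, and specifying that excision \emph{is} the content of Mohar's proposition. The arithmetic of your induction also does not cohere: no operation is given by which a single copy of $\psi$ yields $G(10,10,10)$ ``directly,'' and an inductive step that attaches one more copy of a $7$-vertices-per-side gadget while increasing each side by only $4$ must consume three vertices per side at each junction --- inconsistent with your base step and with anything resembling the operation described in Section~3. Finally, the two issues you rightly identify as ``the real content'' --- choosing gluings so that no two unsaturated vertices are identified, and checking that quadrilaterality survives --- are precisely the steps you defer (``once those checks go through''). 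As written, the proposal restates the difficulty of Mohar's Proposition~1 without resolving it; to make this self-contained you would need to either reproduce Mohar's operation explicitly or replace it with an argument in the style of Lemma~\ref{lem-odd}, where the required rotation data is exhibited.
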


\begin{proof}[Proof of Theorem \ref{thm-balanced}]
Mohar \cite{Mohar-Nonorientable} proved the $n \equiv 0 \pmod{4}$ case. The current graphs in Figures \ref{fig-g7}, \ref{fig-g-case1}, and \ref{fig-g-case3} generate quadrangular embeddings of $G(n,n,n)$ when $n$ is odd and $n \geq 7$. The derived embeddings from Figures \ref{fig-g7} and \ref{fig-g-case3} must be nonorientable, simply because $G(4s+3,4s+3,4s+3)$ cannot quadrangulate an orientable surface. We check that the derived embeddings of Figure \ref{fig-g-case1} are also nonorientable. 

The twisted edges in a derived embedding correspond to the edges in the current graph traversed twice in the same direction by face boundary walks. For the face orientations in Figure \ref{fig-g-case1}, those edges are just the self-loops, and hence the only twisted edges are of the form $(i,i+1)$. The closed walk $0 \to 1 \to 6 \to 3 \to 0$ in each derived embedding traverses exactly one twisted edge, so the embedding is nonorientable. 

By Proposition \ref{prop-g666} and Lemma \ref{lem-mohar}, there also exist nonorientable quadrangular embeddings of $G(4s+2,4s+2,4s+2)$ for all $s \geq 1$. 
\end{proof}

\bibliographystyle{alpha}
\bibliography{biblio}

\end{document}